\numberwithin{equation}{section}
\newtheorem{theorem}{Theorem}[section]
\newtheorem{proposition}[theorem]{Proposition}
\newtheorem{lemma}[theorem]{Lemma}
\theoremstyle{definition}
\theoremstyle{remark}
\newtheorem{remark}[theorem]{Remark}
\renewcommand{\hom}{\operatorname{Hom}}
\renewcommand{\ker}{\operatorname{Ker}}
\newcommand{\Z}{\mathbb{Z}}
\newcommand{\C}{\mathbb{C}}
\newcommand{\proj}{{\mathbb P}}
\newcommand{\PGL}{{\rm PGL}}
\newcommand{\SL}{{\rm SL}}
\newcommand{\GL}{{\rm GL}}
\newcommand{\PGLPGL}{{\rm PGL}_2\times{\rm PGL}_2}
\newcommand{\SLSL}{{\rm SL}_2\times{\rm SL}_2}
\newcommand{\Oline}{\mathcal{O}_{{\mathbb P}^{1}}}
\newcommand{\Oplane}{\mathcal{O}_{{\mathbb P}^{2}}}
\newcommand{\OQ}{\mathcal{O}_{Q}}
\newcommand{\OPVb}{\mathcal{O}_{{\mathbb P}V_b}}
\newcommand{\Gab}{{\mathbb G}(a, {\proj}V_b)}
\newcommand{\GaW}{{\mathbb G}(a, {\proj}W)}
\newcommand{\sheaf}{\mathcal{O}}
\DeclareMathOperator{\aut}{Aut}
\begin{document}

\title[]{Rationality of fields of invariants for some representations of ${\SLSL}$}
\author[]{Shouhei Ma}
\address{Graduate~School~of~Mathematics, Nagoya~University, Nagoya 464-8601, Japan}
\email{ma@math.nagoya-u.ac.jp}
\thanks{Supported by Grant-in-Aid for JSPS fellows [21-978] and Grant-in-Aid for Scientific Research (S), No 22224001.} 
\subjclass[2000]{14L30, 14E08, 14H50}
\keywords{${\SLSL}$-representation, rationality problem, rational space curve, transvectant for biform} 
\maketitle 

\begin{abstract}
We prove that the quotient by ${\SLSL}$ of the space of bidegree $(a, b)$ curves on ${\proj}^1\times{\proj}^1$ 
is rational when $ab$ is even and $a\ne b$. 
\end{abstract}

\maketitle


\section{Introduction}\label{sec: intro}

The main objective of this article is to give a simple proof that the fields of invariants are rational 
for some irreducible representations of ${\SLSL}$. 
Such representations are realized as the spaces $V_{a,b}=H^0({\OQ}(a, b))$ of 
biforms of bidegree $(a, b)$ on the surface $Q={\proj}^1\times{\proj}^1$. 
By symmetry we may restrict to the range $a\leq b$. 
In \cite{SB1} Shepherd-Barron proved that ${\proj}V_{3,b}/{\SLSL}$ with $b$ even is rational 
by analyzing transvectants for biforms. 
The case $a=1$, $b$ even $\geq10$ is also settled by him in another paper \cite{SB2}. 
We shall prove the following. 

\begin{theorem}\label{main}
The quotient $|{\OQ}(a, b)|/{\SLSL}$ is rational 
when $a<b$ and $ab$ is even. 
\end{theorem}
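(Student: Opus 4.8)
The plan is to use the hypothesis $a<b$ to attach to a generic biform a linear subspace, and thereby fibre $\proj V_{a,b}$ over a Grassmannian in a way that separates the two factors of $\SLSL$. Write $V_{a,b}=V_a\otimes V_b$, where $V_a,V_b$ are the spaces of binary forms on the two rulings, so that the first $\SL_2$ acts on $V_a$ and the second on $V_b$. A biform $f$ is then the same datum as a linear map $\phi_f\colon V_a^{\vee}\to V_b$. Since $a<b$ we have $\dim V_a<\dim V_b$, so for generic $f$ the map $\phi_f$ is injective and $\im\phi_f$ is an $a$-plane in $\proj V_b$, i.e. a point $\rho(f)\in\Gab$. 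Precomposition by the first factor does not change $\im\phi_f$, while the second factor acts on $V_b$; hence $\rho\colon\proj V_{a,b}\dashrightarrow\Gab$ is dominant, invariant under the first $\SL_2$ and equivariant for the second, where the second $\SL_2$ acts on $\Gab$ through $V_b$ and the first factor acts trivially on $\Gab$.

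First I would analyse the fibres of $\rho$. Over a subspace $E\in\Gab$ the fibre is the open locus of full-rank maps inside $\proj\hom(V_a^{\vee},E)$; the first $\SL_2$ acts by precomposition and the stabiliser of $E$ in the second $\SL_2$ by postcomposition. This exhibits $\proj V_{a,b}$ as $\SLSL$-birational to the projectivisation of the vector bundle $V_a\otimes\mathcal E$ over $\Gab$, with $\mathcal E$ the tautological subbundle (a dimension count $(a+1)(b-a)+(a+1)^2=(a+1)(b+1)$ confirms the birational match). The problem then splits into the quotient of the base $\Gab$ by the second factor and the fibrewise quotient by the first factor, to be reassembled by the no-name lemma.

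For the fibre, fixing a trivialisation $E\cong V_a^{\vee}$ identifies the full-rank locus with $\GL(V_a)=\GL_{a+1}$ and the first-factor action with translation by $\SL_2\to\GL_{a+1}$ through $\operatorname{Sym}^a$. After projectivising the fibre the relevant quotient is $\SL_2\backslash\PGL_{a+1}$, and globally over $\Gab$ one obtains a bundle whose structure is governed by a $\PGL_2$-torsor, since the image of $\SL_2$ in $\PGL_{a+1}$ is $\PGL_2$. As $\PGL_2$ is not a special group, a Severi--Brauer type obstruction appears, and trivialising the bundle birationally amounts to producing an $\SLSL$-equivariant rational section. This is exactly where I expect the parity hypothesis to enter: the obstruction class is $2$-torsion and is controlled by the parities of $a$ and $b$, and I expect it to be killed precisely when $ab$ is even, yielding the required section (equivalently, a covariant of the correct degree). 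Verifying this vanishing — and with it the applicability of the no-name lemma over the base quotient — is the step I expect to be the main obstacle of the proof.

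It then remains to treat the base quotient $\Gab/\SL_2$. Here I would exploit that the acting group is only three-dimensional while the Grassmannian is rational with ample room: using the $\PGL_2$-action on $\proj V_1=\proj^1$ one rigidifies by cutting out a rational slice transverse to the generic orbit, reducing $\Gab/\SL_2$ to an explicitly rational variety (and to a point in the small cases where the generic orbit is already dense). Combining the rationality of the base quotient with the birational triviality of the fibration secured in the previous step gives that $\proj V_{a,b}/\SLSL$ is rational, which is the assertion that $|\OQ(a,b)|/\SLSL$ is rational.
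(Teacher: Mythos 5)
Your skeleton coincides with the paper's: the same rational fibration $\proj V_{a,b}\dashrightarrow\Gab$ realizing $\proj V_{a,b}$ as $\proj(V_a\otimes\mathcal{E})$ for the tautological subbundle $\mathcal{E}$, with the first factor acting fibrewise and the second on the base. But the two steps you flag as ``expected'' are exactly the steps a proof must supply, and one of them is handled wrongly. On the parity step you stop at hoping a $2$-torsion Severi--Brauer obstruction vanishes; in the paper nothing of that machinery is needed, because the issue is resolved before any torsor appears: when $b$ is even, $-1\in\SL_2$ acts trivially on $V_b$, so $\mathcal{E}$ is already $\PGL_2$-linearized; when $b$ is odd (hence $a$ even, so $\mathcal{E}$ has odd rank $a+1$), twisting by the Pl\"ucker bundle $\mathcal{L}=\det\mathcal{E}^{\vee}$ cancels the action of $-1$. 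With the resulting $\PGL_2$-linearization and the almost-freeness of $\PGL_2$ on $\Gab$ (itself a real argument, Lemma \ref{almost free}, valid only for $b>4$), the no-name lemma trivializes $V_a\otimes\mathcal{E}'$ equivariantly, giving $\proj V_{a,b}/\SLSL\sim(\proj V_a^{\oplus a+1}/\SL_2)\times(\Gab/\SL_2)$; the fibre quotient is then disposed of by Katsylo's rationality theorem for reducible $\SL_2$-representations \cite{Ka3}, not by your $\SL_2\backslash\PGL_{a+1}$ analysis (note also the image of $\SL_2$ in $\PGL_{a+1}$ is $\PGL_2$, so your coset space should read $\PGL_2\backslash\PGL_{a+1}$).

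The fatal gap is your treatment of the base. ``Cutting out a rational slice transverse to the generic orbit'' of $\PGL_2$ on $\Gab$ is circular: producing such a slice rationally is equivalent to the rationality problem itself, and nothing in your proposal constructs one. The paper never proves $\Gab/\SL_2$ rational directly; it proves only the stable comparison $\C^a\times(\Gab/\SL_2)\sim\C^{a(b-a)}\times(\proj V_b/\SL_2)$ (Lemma \ref{stable rationality GaW}, via the incidence variety $\proj\mathcal{E}\to\proj V_b$ and a second no-name application), and then rests on the deep theorems of Katsylo and Bogomolov--Katsylo \cite{Ka1}, \cite{Ka2}, \cite{B-K} that $\proj V_b/\SL_2$ is rational --- an external input for which your argument offers no substitute. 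Finally, your proof addresses only the generic range: the Grassmannian method requires $a>1$ and $b>4$, and the remaining cases $(a,b)=(2,3),(2,4),(1,4),(1,6),(1,8)$ are not ``small cases where the generic orbit is already dense'' --- for $a=1$ your fibre analysis collapses outright, since $\GL_2$ acts almost transitively on the fibres of the fibration --- and they occupy Sections \ref{sec: space curve} and \ref{sec: transvectant} of the paper, treated by rational plane cubics and quartics and by double fibrations built from transvectants, with two further cases quoted from Shepherd-Barron \cite{SB1}, \cite{SB2}.
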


Let $V_d$ denote the ${\SL}_2$-representation $H^0({\Oline}(d))$. 
For most $(a, b)$ our proof is based on the following simple idea: 
we identify $V_{a,b}$ with $V_a\otimes V_b={\hom}(V_a^{\vee}, V_b)$, 
and consider the natural fibration 
\begin{equation}\label{eqn: basic map intro}
{\hom}(V_a^{\vee}, V_b) \dashrightarrow {\Gab}
\end{equation}
which associates to a linear map its image in ${\proj}V_b$, 
where ${\Gab}$ is the Grassmannian of $a$-planes in ${\proj}V_b$. 
This is birationally a vector bundle on which 
the first factor of ${\SLSL}$ acts fiberwisely and the second factor acts equivariantly. 
Starting from \eqref{eqn: basic map intro}, we compare several fibrations, and  finally reduce the problem 
to the rationality of ${\proj}V_b/{\SL}_2$ due to Katsylo and Bogomolov \cite{Ka1}, \cite{Ka2}, \cite{B-K}. 

Although we have the fibration \eqref{eqn: basic map intro} for any $a\leq b$, 
there arise difficulties in analyzing it in the following cases: 
\begin{itemize}
\item When $ab$ is odd, a Brauer-Severi scheme over ${\Gab}/{\SL}_2$ becomes birationally nontrivial; 
\item When $a=b$, ${\Gab}$ is one point; 
\item When $a=1$, ${\GL}_2$ acts almost transitively on the fibers of \eqref{eqn: basic map intro}; 
\item For a few other $(a, b)$, ${\PGL}_2$ does not act almost freely on some of relevant spaces. 
\end{itemize}
The first two cases, excluded from Theorem \ref{main}, are the subject of future study. 
For the third case (with $b$ even) we just add a few supplements to the result of \cite{SB2}, mainly using transvectants. 
To study the last case, we identify ${\proj}V_{a,b}$ birationally with 
the space of parametrized rational curves of degree $b$ in ${\proj}^a$. 
We have actually $a=2$ in the relevant cases, and then the rationality is proved by using 
the geometry of rational plane cubics and quartics. 

We note that our argument utilizing the fibration \eqref{eqn: basic map intro} 
will apply more generally to a certain class of representations of product groups. 
In \S \ref{ssec:general} we formulate it in general forms (Propositions \ref{general method I} and \ref{general method II}). 
We then apply it to $V_{a,b}$ in \S \ref{ssec:application}, deducing Theorem \ref{main} for $a>1$, $b>4$.  
In \S \ref{sec: space curve} and \S \ref{sec: transvectant} we treat the remaining few cases in ad hoc ways as above. 


Throughout this article we work over the complex numbers. 
 

\medskip

\noindent
\textbf{Acknowledgement.}
I would like to thank the referee for suggesting to develop \S \ref{ssec:general} in detail, 
which in a previous version was presented only crudely.


\section{Fibration over Grassmannian}\label{sec: main}

In this section we prove Theorem \ref{main} in the main case $a>1$, $b>4$. 
We first explain in \S \ref{ssec:general} the method of proof in a general setting, 
and then apply it in \S \ref{ssec:application} to the present problem.

\subsection{A general method}\label{ssec:general}

Let $V, W$ be representations of algebraic groups $G, H$ respectively. 
We set 
\begin{equation*}
a = {\dim}{\proj}V, \quad b = {\dim}{\proj}W, 
\end{equation*}
and assume that $a\leq b$. 
The tensor product $V\otimes W$ is a representation of $G\times H$. 
We identify $V\otimes W$ with ${\hom}(V^{\vee}, W)$ and 
consider the images of linear maps $V^{\vee}\to W$ that are injective. 
This defines a fibration 
\begin{equation}\label{eqn: basic map}
V\otimes W\dashrightarrow{\GaW} 
\end{equation}
over the Grassmannian ${\GaW}$ of $a$-planes in ${\proj}W$. 
If we denote by $\mathcal{E}\to{\GaW}$ the universal subbundle of rank $a+1$, 
then by \eqref{eqn: basic map} $V\otimes W$ becomes $G\times H$-equivariantly birational to 
the vector bundle $V\otimes\mathcal{E}$ over ${\GaW}$. 
Here $G$ acts on $V$ linearly and $H$ acts on the bundle $\mathcal{E}$ equivariantly. 
Consequently, we have 
\begin{equation}\label{eqn: fibration}
{\proj}(V\otimes W)/G\times H \sim {\proj}(V\otimes\mathcal{E})/G\times H. 
\end{equation}

We shall explain an approach to the rationality problem for ${\proj}(V\otimes W)/G\times H$ utilizing this description. 
Let $G_0\subset G$ (resp. $H_0\subset H$) be the subgroup of elements which act trivially on ${\proj}V$ (resp. ${\proj}W$). 
In particular, $H_0$ acts on the bundle $\mathcal{E}$ by some scalar multiplications. 
We denote $\overline{G}=G/G_0$ and $\overline{H}=H/H_0$. 

\begin{lemma}\label{no-name GaW}
Suppose that (i) $\overline{H}$ acts on ${\GaW}$ almost freely and 
(ii) we have an $H$-linearized line bundle $\mathcal{L}$ over ${\GaW}$ such that 
$H_0$ acts on $\mathcal{E}\otimes\mathcal{L}$ trivially. 
Then  
\begin{equation}\label{eqn:no-name GaW}
{\proj}(V\otimes W)/G\times H \sim ({\proj}V^{\oplus a+1}/G)\times ({\GaW}/H). 
\end{equation}
\end{lemma}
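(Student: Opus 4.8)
The plan is to perform the two quotients appearing in \eqref{eqn: fibration} in succession: first divide by $H$ over the Grassmannian, descending the fibration to the quotient $\GaW/H$, and then divide by $G$ on the remaining fibers. The tool for the first stage is the no-name method, and the whole point of hypotheses (i) and (ii) is to make it applicable. First I would replace $V\otimes\mathcal{E}$ by its twist $\mathcal{G}=\mathcal{E}\otimes\mathcal{L}$. Since the projectivization of a bundle is unchanged by tensoring with a line bundle and $\mathcal{L}$ is $H$-linearized, this gives an $H$-equivariant identification $\proj(V\otimes\mathcal{E})=\proj(V\otimes\mathcal{G})$, while $G$ continues to act through the factor $V$ alone. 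By (ii) the subgroup $H_0$ acts trivially on $\mathcal{G}$, and it acts trivially on $V$ as well; hence $H_0$ acts trivially on $V\otimes\mathcal{G}$, so the $H$-action on this bundle descends to one of $\overline{H}=H/H_0$. This is the crucial preparatory step: although $\mathcal{E}$ itself carries only a projective $\overline{H}$-action because $H_0$ scales its fibers, the twist in (ii) turns $\mathcal{G}$ into a genuinely $\overline{H}$-linearized vector bundle.

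Next I would divide by $H$. By (i) the group $\overline{H}$ acts almost freely on $\GaW$, so $\GaW\to B:=\GaW/H$ is generically an $\overline{H}$-torsor; since $\proj(V\otimes\mathcal{G})\to\GaW$ is $\overline{H}$-equivariant, the action upstairs is almost free too. Because $V\otimes\mathcal{G}$ is $\overline{H}$-linearized, it descends along $\GaW\to B$ to a vector bundle of the shape $V\otimes\overline{\mathcal{G}}$ on $B$, where $\overline{\mathcal{G}}$ is the descent of $\mathcal{G}$ and $G$ still acts only on $V$. Consequently
\begin{equation*}
\proj(V\otimes\mathcal{G})/G\times H \sim \proj(V\otimes\overline{\mathcal{G}})/G,
\end{equation*}
a projective bundle over $B$ with residual $G$-action on $V$. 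It is essential here that the descended object be the projectivization of an \emph{honest} vector bundle rather than a nontrivial Brauer--Severi scheme over $B$; this is exactly what (ii) secures, and is the step that breaks down in the odd case flagged in the introduction.

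Finally I would trivialize and divide by $G$. As a vector bundle over a variety, $\overline{\mathcal{G}}$ is Zariski-locally free, so over a dense open $U\subseteq B$ there is an isomorphism $\overline{\mathcal{G}}|_U\cong U\times\C^{a+1}$. Tensoring with $V$ yields $(V\otimes\overline{\mathcal{G}})|_U\cong U\times(V\otimes\C^{a+1})$, and since this trivialization leaves the factor $V$ untouched it is automatically $G$-equivariant, with $G$ acting only on $V\otimes\C^{a+1}\cong V^{\oplus a+1}$. Projectivizing and passing to the $G$-quotient then gives
\begin{equation*}
\proj(V\otimes\overline{\mathcal{G}})/G \sim U\times(\proj V^{\oplus a+1}/G) \sim (\proj V^{\oplus a+1}/G)\times(\GaW/H),
\end{equation*}
which combined with \eqref{eqn: fibration} is the asserted birational equivalence \eqref{eqn:no-name GaW}.

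The hard part will be the descent in the middle step: justifying that $V\otimes\mathcal{G}$ descends to a genuine vector bundle on $B$, and not merely to a Brauer--Severi scheme. This is precisely where both hypotheses intervene, (i) to make $\GaW\to B$ a generic torsor and (ii) to annihilate the scalar action of $H_0$ that would otherwise produce a nontrivial Brauer class, and it is the crux separating the even case treated here from the odd case left aside.
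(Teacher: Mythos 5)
Your proposal is correct and takes essentially the same approach as the paper: both twist $\mathcal{E}$ by $\mathcal{L}$ to kill the scalar action of $H_0$, invoke the no-name method for the almost-free $\overline{H}$-action on ${\GaW}$, and conclude by splitting the quotient as a product. Your two-stage presentation (descent of $V\otimes(\mathcal{E}\otimes\mathcal{L})$ to ${\GaW}/H$, then Zariski-local trivialization and the $G$-quotient) is merely the standard unpacking of the no-name lemma, which the paper applies in a single step via an $\overline{H}$-equivariant trivialization upstairs.
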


\begin{proof}
By the assumption (ii), the $H$-linearization of the bundle $\mathcal{E}'=\mathcal{E}\otimes\mathcal{L}$
descends to an $\overline{H}$-linearization. 
Then by the assumption (i) we may apply the no-name lemma to $\mathcal{E}'$, 
trivializing it as an $\overline{H}$-linearized vector bundle locally in the Zariski topology. 
Since ${\proj}(V\otimes\mathcal{E})$ is canonically identified with ${\proj}(V\otimes\mathcal{E}')$, 
we obtain the $G\times\overline{H}$-equivariant birational equivalence 
\begin{equation*}
{\proj}(V\otimes\mathcal{E}) = {\proj}(V\otimes\mathcal{E}') \sim 
{\proj}(V\otimes{\C}^{a+1})\times{\GaW}, 
\end{equation*}
where both $G$ and $\overline{H}$ act trivially on the factor ${\C}^{a+1}$. 
\end{proof}

Note that any $H$-linearized line bundle $\mathcal{L}$ over ${\GaW}$ is the tensor product of 
a power of the Pl\"ucker line bundle ${\det}\,\mathcal{E}^{\vee}$ with a 1-dimensional representation of $H$. 

By \eqref{eqn:no-name GaW}, the rationality problem for ${\proj}(V\otimes W)/G\times H$ might be 
decomposed into proving that ${\proj}V^{\oplus a+1}/G$ is rational and that 
${\GaW}/H$ is stably rational of level $\leq {\dim}({\proj}V^{\oplus a+1}/G)$. 
The latter two problems could be studied, for example, via the following reductions. 

\begin{lemma}\label{rationality PVa+1/G}
If $\overline{G}$ acts on ${\proj}V^{\oplus a'}$ almost freely for some $a'\leq a$, we have 
\begin{equation}\label{eqn:rationality PVa+1/G}
{\proj}V^{\oplus a+1}/G \sim {\C}^{(a+1)(a-a'+1)}\times({\proj}V^{\oplus a'}/G). 
\end{equation}
\end{lemma}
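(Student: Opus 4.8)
The plan is to realize $\proj V^{\oplus a+1}/G$ as the quotient of a vector bundle over $\proj V^{\oplus a'}/G$ and then to trivialize that bundle by the no-name lemma, in the same spirit as the proof of Lemma \ref{no-name GaW}. Write $\C^{a+1}=\C^{a'}\oplus\C^{a+1-a'}$, so that $V^{\oplus a+1}=V\otimes\C^{a+1}$ decomposes as $(V\otimes\C^{a'})\oplus(V\otimes\C^{a+1-a'})$. The linear projection onto the first summand is $G$-equivariant and induces a $G$-equivariant rational map
\begin{equation*}
\proj V^{\oplus a+1} \dashrightarrow \proj V^{\oplus a'},
\end{equation*}
defined on the dense open locus where the projection is nonzero. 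The first step is to identify this map, over that locus, with a vector bundle: sending a point $[w',w'']$ (with $w'\in V\otimes\C^{a'}$ nonzero) to the linear map $\langle w'\rangle\to V\otimes\C^{a+1-a'}$, $w'\mapsto w''$, gives a $G$-equivariant isomorphism of the locus $\{w'\neq0\}$ with the total space of the $G$-linearized bundle
\begin{equation*}
\mathcal{E}_0 = \hom(\sheaf(-1),\, V\otimes\C^{a+1-a'}) = \sheaf(1)\otimes V\otimes\C^{a+1-a'}
\end{equation*}
over $\proj V^{\oplus a'}$, where $\sheaf(-1)$ denotes the tautological subbundle. Its rank is $(a+1)(a+1-a')$, which is the exponent appearing in \eqref{eqn:rationality PVa+1/G}.

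The second step is to check that the $G$-linearization of $\mathcal{E}_0$ factors through $\overline{G}$, so that the no-name lemma becomes applicable. Since $G_0$ preserves every line of $V$, it acts on $V$ by a character $\chi\colon G_0\to\C^{\times}$; hence it acts on $V\otimes\C^{a'}$ and on the fibres of $\sheaf(-1)$ by $\chi$, therefore on $\sheaf(1)$ by $\chi^{-1}$, and consequently on $\mathcal{E}_0=\sheaf(1)\otimes V\otimes\C^{a+1-a'}$ by $\chi^{-1}\cdot\chi=1$. Thus $G_0$ acts trivially on $\mathcal{E}_0$ and the linearization descends to an $\overline{G}$-linearization. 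By hypothesis $\overline{G}$ acts almost freely on the base $\proj V^{\oplus a'}$, so the no-name lemma trivializes $\mathcal{E}_0$ birationally as an $\overline{G}$-linearized bundle, yielding
\begin{equation*}
\proj V^{\oplus a+1}/G = \mathcal{E}_0/\overline{G} \sim (\proj V^{\oplus a'}/\overline{G})\times\C^{(a+1)(a+1-a')}.
\end{equation*}
This is exactly \eqref{eqn:rationality PVa+1/G}, after noting $a+1-a'=a-a'+1$ and $\proj V^{\oplus a'}/\overline{G}=\proj V^{\oplus a'}/G$.

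The only genuinely delicate point is the descent to $\overline{G}$: one must verify that the scalar by which $G_0$ acts on $\sheaf(1)$ precisely cancels the scalar it contributes on the constant tensor factor $V$, which is what lets the almost-freeness hypothesis on $\overline{G}$ feed into the no-name lemma. Once this cancellation is in place, both the vector-bundle identification and the no-name trivialization are routine, and the dimension count confirms the stated birational equivalence.
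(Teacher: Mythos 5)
Your proof is correct and takes essentially the same approach as the paper: the paper's one-sentence argument applies the no-name lemma to the projection ${\proj}V^{\oplus a+1}\dashrightarrow{\proj}V^{\oplus a'}$ from the complementary summand $V^{\oplus a-a'+1}$, asserting that this is a $\overline{G}$-linearized vector bundle. Your two steps --- identifying the bundle as $\sheaf(1)\otimes V\otimes{\C}^{a+1-a'}$ and verifying that the $G_0$-character on $V$ cancels against its inverse on $\sheaf(1)$ --- merely spell out the details behind that assertion.
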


\begin{proof}
This is a consequence of the no-name lemma applied to 
the projection ${\proj}V^{\oplus a+1}\dashrightarrow{\proj}V^{\oplus a'}$ from 
some complementary summand $V^{\oplus a-a'+1}$, 
which is a $\overline{G}$-linearized vector bundle. 
\end{proof}

\begin{lemma}\label{stable rationality GaW}
In addition to the assumptions (i), (ii) in Lemma \ref{no-name GaW}, 
suppose furthermore that (iii) $\overline{H}$ acts on ${\proj}W$ almost freely.  
Then we have  
\begin{equation}\label{eqn:stable rationality GaW} 
{\C}^a\times({\GaW}/H) \sim {\C}^{a(b-a)}\times({\proj}W/H). 
\end{equation}
\end{lemma}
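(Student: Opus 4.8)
The plan is to introduce the incidence variety
\[
I \;=\; \{\, (\Lambda, p)\in \GaW\times\proj W \;:\; p\in\Lambda \,\}
\]
and to compute the quotient $I/H$ in two different ways, using its two projections. Projecting to $\GaW$ realizes $I$ as the projective bundle $\proj\mathcal{E}$ of the universal subbundle, with fibre $\proj^a$. Projecting to $\proj W$ realizes $I$ as the relative Grassmannian ${\mathbb G}(a, \mathcal{Q})$ of rank $a$ subbundles of the universal quotient bundle $\mathcal{Q}$ (of rank $b$) on $\proj W$, with fibre ${\mathbb G}(a-1, \proj^{b-1})$ of dimension $a(b-a)$. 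Since $H_0$ acts on $W$ by scalars, it acts trivially on both $\GaW$ and $\proj W$, hence trivially on $I$; thus $I/H=I/\overline{H}$ in all that follows, and it remains to match the two descriptions.

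First I would treat the projection $I\to\GaW$ exactly as in the proof of Lemma \ref{no-name GaW}. By assumption (ii) the bundle $\mathcal{E}'=\mathcal{E}\otimes\mathcal{L}$ carries a trivial $H_0$-action, so it descends to an $\overline{H}$-linearized bundle; by assumption (i) the no-name lemma trivializes it over a dense open of $\GaW/\overline{H}$. As $\proj\mathcal{E}=\proj\mathcal{E}'$, this yields
\[
I/H \;\sim\; (\GaW/H)\times\proj^a \;\sim\; \C^a\times(\GaW/H).
\]

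The main step is the projection $I\to\proj W$, where the fibre is a Grassmannian rather than a vector or projective space, so the no-name lemma does not apply directly. To circumvent this I would first arrange that the coefficient bundle descends. An element of $H_0$ acts on $W$ as a single scalar $\lambda$, hence acts by $\lambda$ on $\sheaf_{\proj W}(-1)$ and on $\mathcal{Q}$, and therefore trivially on $\mathcal{Q}'=\mathcal{Q}\otimes\sheaf_{\proj W}(1)$; thus $\mathcal{Q}'$ descends to an $\overline{H}$-linearized rank $b$ bundle on $\proj W$. Tensoring by a line bundle does not change the relative Grassmannian, so ${\mathbb G}(a, \mathcal{Q}')={\mathbb G}(a, \mathcal{Q})=I$ as $H$-varieties. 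By assumption (iii) the action of $\overline{H}$ on $\proj W$ is almost free, so the no-name lemma trivializes $\mathcal{Q}'$ over a dense open of $\proj W/\overline{H}$, and since ${\mathbb G}(a,\mathcal{Q}')$ is the fibre bundle associated to $\mathcal{Q}'$, this trivialization is inherited by $I$. Hence
\[
I/H \;\sim\; (\proj W/H)\times{\mathbb G}(a-1, \proj^{b-1}) \;\sim\; \C^{a(b-a)}\times(\proj W/H),
\]
the last step because ${\mathbb G}(a-1, \proj^{b-1})$ is rational of dimension $a(b-a)$. Comparing the two expressions for $I/H$ gives \eqref{eqn:stable rationality GaW}. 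I expect the only delicate point to be this last passage, from the no-name trivialization of the \emph{vector} bundle $\mathcal{Q}'$ to the triviality of the \emph{associated Grassmann} bundle: over the locus where $\overline{H}$ acts freely, $\mathcal{Q}'$ is pulled back from a vector bundle on the quotient, so its relative Grassmannian is pulled back as well and becomes the constant bundle with fibre ${\mathbb G}(a-1,\proj^{b-1})$ after shrinking the base. The twist by $\sheaf_{\proj W}(1)$ is precisely what lets $\mathcal{Q}'$, and not merely its projectivization, descend to $\overline{H}$, which is what makes this argument go through with hypothesis (iii) alone.
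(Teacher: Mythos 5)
Your proposal is correct and is essentially the paper's own proof: both compute the quotient of the incidence variety ${\proj}\mathcal{E}=\{(P,x)\in{\GaW}\times{\proj}W : x\in P\}$ via its two projections, trivializing $\mathcal{E}\otimes\mathcal{L}$ over ${\GaW}$ exactly as in Lemma \ref{no-name GaW}, and identifying the second projection as the relative Grassmannian of the twisted universal quotient bundle $\mathcal{F}\otimes\mathcal{O}_{{\proj}W}(1)$ (your $\mathcal{Q}'$), to which the no-name lemma applies by hypothesis (iii). Your closing remarks on $H_0$ acting trivially on the incidence variety and on why the vector-bundle trivialization passes to the associated Grassmann bundle merely make explicit what the paper leaves implicit.
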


\begin{proof}
By the same argument as in the proof of Lemma \ref{no-name GaW}, 
we see that ${\proj}\mathcal{E}/H$ is birational to ${\proj}^a\times({\GaW}/H)$. 
We regard ${\proj}\mathcal{E}$ as the incidence variety 
\begin{equation*}
{\proj}\mathcal{E} = \{ (P, x)\in{\GaW}\times{\proj}W, \; x\in P \} \subset {\GaW}\times{\proj}W. 
\end{equation*}
The fiber of the second projection $\pi\colon{\proj}\mathcal{E}\to{\proj}W$ over $x=[w]\in{\proj}W$ is 
identified with ${\mathbb G}(a-1, {\proj}(W/{\C}w))$. 
Therefore, if $\mathcal{F}\to{\proj}W$ is the universal quotient bundle of rank $b$, 
${\proj}\mathcal{E}$ is identified with the relative Grassmannian ${\mathbb G}(a-1, {\proj}\mathcal{F})$ 
over ${\proj}W$ via $\pi$. 
Then ${\mathbb G}(a-1, {\proj}\mathcal{F})$ is canonically isomorphic to ${\mathbb G}(a-1, {\proj}\mathcal{F}')$ 
for the $H$-linearized bundle $\mathcal{F}'=\mathcal{F}\otimes\mathcal{O}_{{\proj}W}(1)$. 
Since $H_0$ acts on $\mathcal{F}$ and $\mathcal{O}_{{\proj}W}(-1)$ by the same scalars, 
$\mathcal{F}'$ is $\overline{H}$-linearized. 
Now we can use the no-name lemma for $\mathcal{F}'$ to trivialize it 
as an $\overline{H}$-linearized vector bundle locally in the Zariski topology. 
Consequently, we obtain the $\overline{H}$-equivariant birational equivalence  
\begin{equation*}
{\proj}\mathcal{E} \simeq {\mathbb G}(a-1, {\proj}\mathcal{F}') \sim {\mathbb G}(a-1, {\proj}^{b-1})\times{\proj}W, 
\end{equation*}
where $\overline{H}$ acts on the factor ${\mathbb G}(a-1, {\proj}^{b-1})$ trivially. 
\end{proof}

Comparing \eqref{eqn:no-name GaW}, \eqref{eqn:rationality PVa+1/G}, and \eqref{eqn:stable rationality GaW} 
and noticing that $(a+1)(a-a'+1)>a$, we can summarize the above argument in the following proposition. 

\begin{proposition}\label{general method I}
Let $V$, $W$ be representations of $G$, $H$ respectively such that $a={\dim}{\proj}V$ is smaller than $b={\dim}{\proj}W$. 
Assume that 
\begin{enumerate}
\item we have an $H$-linearized line bundle $\mathcal{L}$ as in Lemma \ref{no-name GaW}, 
\item $\overline{G}$ acts on ${\proj}V^{\oplus a'}$ almost freely for some $a'\leq a$, and 
\item $\overline{H}$ acts on ${\proj}W$ and ${\GaW}$ almost freely. 
\end{enumerate}
Then, setting $N=(a+1)(a-a')+1+a(b-a)$, we have 
\begin{equation*}
{\proj}(V\otimes W)/G\times H \sim {\C}^N\times({\proj}V^{\oplus a'}/G)\times({\proj}W/H). 
\end{equation*}
\end{proposition}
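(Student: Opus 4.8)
The plan is simply to concatenate the three lemmas just proved while bookkeeping the affine factors; all the substantive geometry has already been done, so the work here is purely combinatorial. First I would invoke Lemma~\ref{no-name GaW}. Its hypothesis (ii) is precisely assumption~(1), and its hypothesis (i)---that $\overline{H}$ act almost freely on ${\GaW}$---is half of assumption~(3). The lemma then yields
\begin{equation*}
{\proj}(V\otimes W)/G\times H \sim ({\proj}V^{\oplus a+1}/G)\times({\GaW}/H).
\end{equation*}

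Next I would treat the two factors on the right separately. Assumption~(2) is exactly the hypothesis of Lemma~\ref{rationality PVa+1/G}, which replaces the first factor by ${\C}^{(a+1)(a-a'+1)}\times({\proj}V^{\oplus a'}/G)$. For the second factor, all hypotheses of Lemma~\ref{stable rationality GaW} are available: (i) and (ii) carry over from Lemma~\ref{no-name GaW}, and (iii)---that $\overline{H}$ act almost freely on ${\proj}W$---is the other half of assumption~(3). That lemma gives the stable equivalence
\begin{equation*}
{\C}^a\times({\GaW}/H) \sim {\C}^{a(b-a)}\times({\proj}W/H).
\end{equation*}

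The only step requiring a moment's care is combining these. Before I can feed ${\GaW}/H$ into the last equivalence I must split off a summand ${\C}^a$ from the affine factor produced by Lemma~\ref{rationality PVa+1/G}; this is possible precisely because $(a+1)(a-a'+1)>a$, which holds since $a'\leq a$ forces $a-a'+1\geq1$. Making this substitution and gathering all the affine factors, the total dimension becomes
\begin{equation*}
\bigl[(a+1)(a-a'+1)-a\bigr]+a(b-a) = (a+1)(a-a')+1+a(b-a) = N,
\end{equation*}
which yields the asserted equivalence ${\proj}(V\otimes W)/G\times H\sim{\C}^N\times({\proj}V^{\oplus a'}/G)\times({\proj}W/H)$.

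I expect no genuine obstacle, since every geometric input is packaged in the three lemmas; the only thing one could get wrong is the dimension count, or the verification that enough ${\C}$-factors are present to apply Lemma~\ref{stable rationality GaW}. Both points are settled by the displayed identity together with the inequality $(a+1)(a-a'+1)>a$ already flagged before the statement.
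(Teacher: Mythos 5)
Your proof is correct and is essentially identical to the paper's own argument, which likewise just concatenates Lemmas \ref{no-name GaW}, \ref{rationality PVa+1/G}, and \ref{stable rationality GaW}, using the inequality $(a+1)(a-a'+1)>a$ to split off the ${\C}^a$ needed for the last lemma. Your dimension count $(a+1)(a-a'+1)-a+a(b-a)=N$ matches the paper's exactly.
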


In this way, the rationality problem for ${\proj}(V\otimes W)/G\times H$ could be reduced, 
under several hypotheses, 
to results concerning stable rationality of ${\proj}V^{\oplus a'}/G$ and ${\proj}W/H$. 
We would like to mention that for invariant fields of linear representations, 
to prove stable rationality is rather easier than to prove rationality in many cases. 

For our application to ${\SLSL}$-representations, 
we also state a variant deduced from Lemmas \ref{no-name GaW} and \ref{stable rationality GaW}, 
bypassing Lemma \ref{rationality PVa+1/G}. 

\begin{proposition}\label{general method II}
Let $V$, $W$ satisfy the assumptions in Proposition \ref{general method I} except $(2)$. 
Suppose instead that ${\proj}V^{\oplus a+1}/G$ is rational of dimension $d\geq a$. 
Then, setting $M=d-a+a(b-a)$, we have 
\begin{equation*}
{\proj}(V\otimes W)/G\times H \sim {\C}^M\times({\proj}W/H). 
\end{equation*}
\end{proposition}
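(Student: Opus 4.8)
The plan is to combine the two birational equivalences \eqref{eqn:no-name GaW} and \eqref{eqn:stable rationality GaW} established above, now feeding in the stronger hypothesis that ${\proj}V^{\oplus a+1}/G$ is actually rational rather than merely stably rational. First I would invoke Lemma \ref{no-name GaW}: assumptions (1) and the almost-freeness of $\overline{H}$ on ${\GaW}$ (part of (3)) are exactly its hypotheses (i) and (ii), so we obtain
\begin{equation*}
{\proj}(V\otimes W)/G\times H \sim ({\proj}V^{\oplus a+1}/G)\times({\GaW}/H).
\end{equation*}
Since ${\proj}V^{\oplus a+1}/G$ is rational of dimension $d$, the first factor is birational to ${\C}^d$, and I may peel off an ${\C}^a$ from it, writing ${\C}^d \sim {\C}^{d-a}\times{\C}^a$, provided $d\geq a$, which is the stated hypothesis. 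This gives
\begin{equation*}
{\proj}(V\otimes W)/G\times H \sim {\C}^{d-a}\times\bigl({\C}^a\times({\GaW}/H)\bigr).
\end{equation*}

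Next I would apply Lemma \ref{stable rationality GaW} to rewrite the bracketed factor. Its hypotheses (i), (ii) are again supplied by (1) and the ${\GaW}$-part of (3), while its additional hypothesis (iii), that $\overline{H}$ act almost freely on ${\proj}W$, is exactly the remaining part of assumption (3). Thus
\begin{equation*}
{\C}^a\times({\GaW}/H) \sim {\C}^{a(b-a)}\times({\proj}W/H),
\end{equation*}
and substituting this into the previous display yields
\begin{equation*}
{\proj}(V\otimes W)/G\times H \sim {\C}^{d-a}\times{\C}^{a(b-a)}\times({\proj}W/H) = {\C}^{M}\times({\proj}W/H),
\end{equation*}
with $M=(d-a)+a(b-a)$, as claimed. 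The whole argument is a bookkeeping assembly of the two lemmas, so there is no genuinely hard step here; the only thing to verify carefully is the dimension count $M$ and the condition $d\geq a$ that licenses extracting the ${\C}^a$ factor needed as input to Lemma \ref{stable rationality GaW}.

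The reason I expect no serious obstacle is that all the difficult content — the no-name lemma applications, the identification of ${\proj}\mathcal{E}$ with a relative Grassmannian, and the $\overline{H}$-linearization of the twisted bundles $\mathcal{E}'$ and $\mathcal{F}'$ — has already been absorbed into Lemmas \ref{no-name GaW} and \ref{stable rationality GaW}. The role of this proposition is precisely to package those lemmas in the form most convenient for the $\SLSL$-application, where ${\proj}V^{\oplus a+1}/G$ turns out to be rational (rather than going through Lemma \ref{rationality PVa+1/G}, which would instead reduce ${\proj}V^{\oplus a+1}/G$ to a smaller ${\proj}V^{\oplus a'}/G$ and demand stable rationality). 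The one point worth a sentence of care is that the tracking of group actions matches up: the factor ${\proj}V^{\oplus a+1}/G$ produced by Lemma \ref{no-name GaW} carries no residual $\overline{H}$-action, so replacing it by ${\C}^d$ is harmless and the subsequent $\overline{H}$-equivariant manipulation of ${\GaW}/H$ in Lemma \ref{stable rationality GaW} proceeds unobstructed.
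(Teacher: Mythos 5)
Your proof is correct and follows exactly the route the paper intends: the paper gives no separate proof of Proposition \ref{general method II}, stating only that it is ``deduced from Lemmas \ref{no-name GaW} and \ref{stable rationality GaW}, bypassing Lemma \ref{rationality PVa+1/G},'' which is precisely your assembly, including the use of $d\geq a$ to split off the ${\C}^a$ factor fed into Lemma \ref{stable rationality GaW} and the count $M=(d-a)+a(b-a)$.
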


\begin{remark}
When $a\geq b$, we can instead consider the kernels of linear maps $V^{\vee}\to W$ 
to obtain a fibration $V\otimes W \dashrightarrow {\mathbb G}(a-b-1, {\proj}V^{\vee})$. 
But if we identify ${\mathbb G}(a-b-1, {\proj}V^{\vee})$ with ${\mathbb G}(b, {\proj}V)$ naturally, 
this coincides with the fibration $W\otimes V \dashrightarrow {\mathbb G}(b, {\proj}V)$ 
as in \eqref{eqn: basic map}. 
\end{remark}

\subsection{Application to $V_{a,b}$}\label{ssec:application}

Let $V_d$ denote the ${\SL}_2$-representation $H^0({\Oline}(d))$. 
We shall apply Proposition \ref{general method II} to the ${\SLSL}$-representations $V_{a,b}=V_a\otimes V_b$ 
such that 
\begin{equation}\label{eqn: 3 conditions}
1<a<b, \quad b>4, \quad ab\in2{\Z}. 
\end{equation}
We have $G=H={\SL}_2$, $G_0=H_0=\{\pm1\}$, and 
$\overline{G}=\overline{H}={\PGL}_2$. 
We first 
check the almost-freeness condition $(3)$ in Proposition \ref{general method I}.

\begin{lemma}\label{almost free}
Let $0\leq a<b$ and $b>4$. 
Then ${\PGL}_2$ acts on ${\Gab}$ almost freely. 
\end{lemma}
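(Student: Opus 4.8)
The goal is to show that $\PGL_2$ acts almost freely (i.e., with trivial stabilizer for a general point) on the Grassmannian $\Gab$ of $a$-planes in $\proj V_b = \proj^b$, under the hypotheses $0\le a<b$ and $b>4$. My strategy is to produce a single $a$-plane $P\subset\proj V_b$ whose stabilizer in $\PGL_2$ is trivial, and then argue that the locus of such planes is nonempty and open. Since almost-freeness is an open condition, exhibiting one such plane suffices.

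\smallskip

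First I would recall that $\PGL_2$ acts on $\proj V_b=\proj^b$ through the $b$-th symmetric power representation, so that the image of $\PGL_2$ lies in $\PGL_{b+1}$ as the automorphism group of the rational normal curve $C\subset\proj V_b$ of degree $b$. A key structural fact is that $\PGL_2$ already acts almost freely on $\proj V_b$ itself when $b>2$: a general binary form of degree $b$ has trivial stabilizer in $\PGL_2$ once $b\ge 3$, because the stabilizer of a point of $\proj^b$ must permute the (at least three distinct, for general forms) roots, and a nontrivial projective linear map of $\proj^1$ fixing enough points is the identity. This handles the case $a=0$ directly and provides the anchor for the general case.

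\smallskip

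For general $a$ with $0<a<b$, the plan is to reduce to the point case by choosing the $a$-plane to be spanned by general points. Concretely, pick $a+1$ general points $x_0,\dots,x_a\in\proj V_b$ and let $P=\langle x_0,\dots,x_a\rangle$ be their span; for general choices this is a genuine $a$-plane. Suppose $g\in\PGL_2$ stabilizes $P$. The induced action of $g$ on $P\cong\proj^a$ is a projective linear automorphism, and the crux is to force $g$ to be trivial. The natural approach is to intersect $P$ with the rational normal curve $C$: since $C$ is $\PGL_2$-invariant, $g$ preserves $C\cap P$, and for a general $a$-plane this intersection is a finite set of $b$ reduced points (as $\deg C=b$), permuted by $g$. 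One then argues, as in the $a=0$ case, that a nontrivial element of $\PGL_2$ cannot fix setwise a general configuration of $b\ge 3$ points on $C\cong\proj^1$ while also preserving the projective structure of $P$.

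\smallskip

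\textbf{The main obstacle.}
I expect the delicate point to be the genericity of $C\cap P$ and ruling out that $g$ permutes these intersection points nontrivially in a way consistent with stabilizing $P$. A clean way around this is to bypass the intersection with $C$ and instead argue on moduli: the stabilizer of a general point of $\Gab$ is a lower-semicontinuous (in fact, generically minimal) function on the irreducible variety $\Gab$, so it suffices to find one plane with trivial stabilizer. Because $\PGL_2$ is $3$-dimensional and $\dim\Gab=(a+1)(b-a)$ grows, a dimension count shows the graph of the action cannot dominate $\Gab\times\Gab$ with positive-dimensional fibers unless the generic stabilizer is finite; upgrading ``finite'' to ``trivial'' is exactly where the hypothesis $b>4$ (excluding the small cases where $C$ admits extra symmetries compatible with a subplane) should enter. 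I would therefore organize the proof around first establishing that the generic stabilizer is finite by a transversality or dimension argument, and then eliminating the finitely many possible nontrivial finite stabilizers by the structure of finite subgroups of $\PGL_2$ acting on the rational normal curve, where $b>4$ is used to exclude the cyclic, dihedral, and exceptional cases.
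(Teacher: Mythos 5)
Your proposal has two genuine gaps, and the first is fatal to the main line of argument. The central step --- intersecting a general $a$-plane $P$ with the rational normal curve $C\subset{\proj}V_b$ and permuting the ``$b$ reduced points'' of $C\cap P$ --- fails because for $a\leq b-2$ a general $a$-plane is \emph{disjoint} from $C$: $\dim C+\dim P=a+1<b$, so $C\cap P=\emptyset$; the count by $\deg C=b$ is valid only for hyperplanes ($a=b-1$). Spanning $P$ by $a+1$ general points of ${\proj}V_b$ does not help, since those points do not lie on $C$; and if you instead move them onto $C$, the resulting secant planes form a small special subvariety of ${\Gab}$, and triviality of the stabilizer at a special point does not propagate to the general point (the stabilizer is only generically constant on some dense open; a special point with trivial stabilizer is compatible with a nontrivial generic stabilizer, since stabilizing elements can escape to infinity in the non-proper group ${\PGL}_2$). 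Moreover your ``anchor'' claim --- that a general binary form of degree $b\geq3$ has trivial stabilizer in ${\PGL}_2$ --- is false for $b=3$ and $b=4$: three general points of ${\proj}^1$ are preserved setwise by an $\mathfrak{S}_3$ of M\"obius transformations, and four general points by a Klein four-group, so triviality begins only at $b\geq5$. This is exactly why the lemma assumes $b>4$, not (as you suggest) to exclude extra symmetries of the rational normal curve or to rule out finite subgroups case by case.

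Your fallback sketch is also not a proof: the graph-dominance dimension count is a non sequitur (the image of ${\PGL}_2\times{\Gab}\to{\Gab}\times{\Gab}$ is small whether or not generic stabilizers are positive-dimensional), and eliminating nontrivial finite stabilizers would require a dimension estimate, uniform over all finite orders and all conjugates, for the fixed loci of finite-order elements acting on ${\Gab}$ --- which you never attempt. The paper's actual argument avoids $C$ entirely: for a general $x\in{\proj}V_b$, project from $x$ to ${\proj}^{b-1}$; since the image of ${\PGL}_2\cdot x\setminus x$ is at most $3$-dimensional, a general $(a-1)$-plane misses it whenever $(a-1)+3<b-1$, i.e. $b-a\geq4$, so the general $a$-plane $P$ through $x$ satisfies $P\cap({\PGL}_2\cdot x)=\{x\}$. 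Any $g$ with $gP=P$ then fixes $x$, and $b>4$ forces $g=1$. The self-duality $V_b\simeq V_b^{\vee}$ gives an equivariant identification ${\Gab}\simeq{\mathbb G}(b-a-1,{\proj}V_b)$, which covers the complementary range $a\geq3$, and the single remaining case $(a,b)=(2,5)$ is settled separately by identifying ${\mathbb G}(2,{\proj}V_5)$ birationally with rational plane quintics modulo ${\PGL}_3$ and using that the six nodes of a general quintic are in general position. If you want to salvage your approach, you would need this kind of orbit-versus-linear-section transversality argument rather than intersection with the rational normal curve.
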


\begin{proof}
The case $a=0$ is well-known, so we assume $a>0$. 
We first consider the case $b-a\geq4$. 
Observe that for a general point $x\in{\proj}V_b$ and a general $a$-plane $P$ through $x$, 
the orbit ${\PGL}_2\cdot x$ does not intersect with $P$ outside $x$. 
Indeed, if we consider the projection $\pi\colon{\proj}V_b\backslash x\to{\proj}^{b-1}$ from $x$, 
a general $(a-1)$-plane $P'\subset{\proj}^{b-1}$ is disjoint from the $3$-fold $\pi({\PGL}_2\cdot x\backslash x)$. 
Then our claim follows by taking the $a$-plane $P=\overline{\pi^{-1}(P')}$. 
Since $b>4$, $x$ is not fixed by any nontrivial $g\in{\PGL}_2$. 
Then $g$ does not preserve $P$, for otherwise it fixes $x=P\cap({\PGL}_2\cdot x)$. 
This proves the lemma in the range $b-a\geq4$.  
Since we have the dualities 
\begin{equation*}
{\mathbb G}(a, {\proj}V_b) \simeq {\mathbb G}(a, {\proj}V_b^{\vee}) \simeq {\mathbb G}(b-a-1, {\proj}V_b), 
\end{equation*}
the range $a\geq3$ is also covered. 
For the remaining case $(a, b)=(2, 5)$, ${\mathbb G}(2, {\proj}V_5)$ is birationally identified with 
the quotient by ${\PGL}_3$ of the space of morphisms ${\proj}^1\to{\proj}^2$ of degree $5$. 
Since a general rational plane quintic has its six nodes in a general position, 
it has no nontrivial stabilizer in ${\PGL}_3$. 
This derives our assertion for ${\mathbb G}(2, {\proj}V_5)$. 
\end{proof}

We now proceed according to the parity of $b$, assuming \eqref{eqn: 3 conditions}. 

When $b$ is even, the element $-1\in{\SL}_2$ acts on $V_b$ trivially so that 
the bundle $\mathcal{E}$ is already ${\PGL}_2$-linearized. 
Moreover, the quotient ${\proj}V_a^{\oplus a+1}/{\SL}_2$ is rational by Katsylo \cite{Ka3} 
and has dimension $a^2+2a-3>a$. 
Hence the assumptions in Proposition \ref{general method II} are satisfied, and we see that 
\begin{equation*}
{\proj}V_{a,b}/{\SLSL} \sim {\C}^{a(b+1)-3}\times({\proj}V_b/{\SL}_2). 
\end{equation*}
Then ${\proj}V_b/{\SL}_2$ is rational by Katsylo and Bogomolov \cite{Ka2}, \cite{B-K}. 

When $b$ is odd, the element $-1\in{\SL}_2$ acts on $V_b$ by the multiplication by $-1$. 
Hence it acts on $\mathcal{E}$ also by the multiplication by $-1$. 
In this case, since $\mathcal{E}$ has odd rank $a+1$ (remember $ab$ is even), 
$-1\in{\SL}_2$ acts on the Pl\"ucker bundle $\mathcal{L}={\det}\,\mathcal{E}^{\vee}$ by $-1$.  
Then we can twist $\mathcal{E}$ by $\mathcal{L}$ to cancel the action of $-1\in{\SL}_2$. 
Thus the condition $(1)$ in Proposition \ref{general method I} is satisfied. 
As in the case of even $b$, we then deduce that ${\proj}V_{a,b}/{\SLSL}$ is birational to 
${\C}^{a(b+1)-3}\times({\proj}V_b/{\SL}_2)$. 
Now ${\proj}V_b/{\SL}_2$ is rational by Katsylo \cite{Ka1}. 

In this way Theorem \ref{main} is proved for $a>1$, $b>4$.


\section{Rational space curves}\label{sec: space curve}

In the rest of the article we study the cases excluded from \eqref{eqn: 3 conditions} to 
complete the proof of Theorem \ref{main}. 
The cases $(a, b)=(3, 4)$ and $a=1$, $b=2n\geq10$ are settled by Shepherd-Barron 
in \cite{SB1} and \cite{SB2} respectively. 
(In \cite{SB2} he proved the rationality of ${\mathbb G}(1, {\proj}V_b)/{\SL}_2$, 
which by either \eqref{eqn: fibration} or \eqref{eqn:space curve III} is birational to ${\proj}V_{1,b}/{\SLSL}$.) 
Hence the cases to be considered are 
\begin{equation*}
(a, b) = (2, 3), (2, 4), (1, 4), (1, 6), (1, 8). 
\end{equation*}

In this section we study the first three cases by geometric approaches. 
In \S \ref{ssec: space curve} we identify $|{\OQ}(a, b)|$ birationally with 
the space of some parametrized rational space curves for any $(a, b)$. 
Using that description, we study the cases $(a, b)=(2, 3)$ and $(2, 4)$ 
in \S \ref{ssec: (2,3)} and \S \ref{ssec: (2,4)} respectively. 
The case $(a, b)=(1, 4)$ is treated independently in \S \ref{ssec: (1,4)}.

\subsection{Rational space curves}\label{ssec: space curve}

Let $a, b>0$ be any positive integers. 
To a general curve $C$ on $Q={\proj}^1\times{\proj}^1$ of bidegree $(a, b)$ 
we may associate a morphism $\phi_C\colon{\proj}^1\to{\proj}V_b=|{\Oline}(b)|$ 
by regarding $C$ as a family of $b$ points on the second factor ${\proj}^1$ 
parametrized by the first factor ${\proj}^1$. 

\begin{lemma}\label{degree}
The curve $\phi_C({\proj}^1)$ has degree $a$, i.e., $\phi_C^{\ast}{\OPVb}(1)\simeq{\Oline}(a)$. 
\end{lemma}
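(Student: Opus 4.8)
The plan is to unwind the definition of $\phi_C$ in coordinates and count the degree of the pullback line bundle directly. Write $Q = \proj^1 \times \proj^1$ with homogeneous coordinates $[s:t]$ on the first factor and $[u:v]$ on the second. A biform $F \in V_{a,b} = H^0(\OQ(a,b))$ defining $C$ is bihomogeneous of degree $a$ in $(s,t)$ and degree $b$ in $(u,v)$. For each fixed point $p = [s:t]$ on the first factor, $F$ restricts to a binary form $F_p(u,v)$ of degree $b$ in the second variables, and this is precisely the element $\phi_C(p) \in \proj V_b = |\Oline(b)|$ cut out on the fiber $\{p\} \times \proj^1$. The key observation is that the coefficients of $F_p$, as functions of $p$, are themselves binary forms of degree $a$ in $(s,t)$.

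First I would make this coefficient description precise: expanding
\begin{equation*}
F(s,t,u,v) = \sum_{i=0}^{b} c_i(s,t)\, u^i v^{b-i},
\end{equation*}
each $c_i(s,t)$ is homogeneous of degree $a$. Thus the map $\phi_C$ sends $[s:t]$ to the point $[c_0(s,t) : \cdots : c_b(s,t)]$ in $\proj V_b \cong \proj^b$, where the homogeneous coordinates on $\proj V_b$ are the coefficients of a degree-$b$ binary form. This realizes $\phi_C$ as the morphism $\proj^1 \to \proj^b$ whose components are the degree-$a$ forms $c_0, \dots, c_b$.

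Next I would pull back the hyperplane class. The line bundle $\OPVb(1)$ has as its sections the linear coordinate functions, which pull back under $\phi_C$ to the $c_i(s,t)$, i.e. to sections of $\Oline(a)$. Equivalently, a general hyperplane $\sum \lambda_i X_i = 0$ in $\proj V_b$ pulls back to the divisor $\{\sum_i \lambda_i c_i(s,t) = 0\}$ on $\proj^1$, a degree-$a$ form, so $\phi_C^\ast \OPVb(1) \simeq \Oline(a)$. The degree of the image curve is then $a$, as claimed.

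The main thing to watch is genericity: I must check that for a general $C$ the map $\phi_C$ is actually a morphism and that no common factor drops the degree. Concretely, the $c_i(s,t)$ should have no common zero in $\proj^1$ — geometrically, that there is no point $p$ on the first factor over which the biform $F$ vanishes identically on the whole fiber, which would correspond to $C$ containing a full fiber $\{p\}\times\proj^1$. A general bidegree $(a,b)$ curve contains no such vertical fiber, so the $c_i$ are coprime and $\phi_C$ is base-point free of the stated degree. This genericity verification is the only real obstacle; the degree count itself is immediate once the coefficient picture is set up.
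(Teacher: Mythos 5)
Your proof is correct, but it takes a genuinely different route from the paper's. You expand the biform as $F=\sum_{i=0}^{b}c_i(s,t)\,u^iv^{b-i}$ and observe that $\phi_C$ is literally the map $[s:t]\mapsto[c_0:\cdots:c_b]$ given by degree-$a$ binary forms, so that $\phi_C^{\ast}{\OPVb}(1)\simeq{\Oline}(a)$ once the $c_i$ have no common factor, i.e.\ once $C$ contains no vertical fiber $\{p\}\times{\proj}^1$. The paper instead argues enumeratively: by Riemann--Hurwitz the first projection $C\to{\proj}^1$ has $2g_C-2+2b=2a(b-1)$ branch points (using $g_C=(a-1)(b-1)$ for smooth $C$), these are the intersections of $\phi_C({\proj}^1)$ with the discriminant hypersurface $D\subset{\proj}V_b$ of degree $2(b-1)$, and dividing gives degree $a$. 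Your coordinate argument is more elementary and in some ways sharper: it proves the stated line-bundle isomorphism directly rather than inferring it from an intersection count, and its genericity requirement (no vertical component) is weaker than what the paper's argument implicitly needs ($C$ smooth, so the genus formula applies, plus simple branching so each branch point contributes once to the intersection with $D$). What the paper's approach buys is a coordinate-free derivation that exhibits the relation between $\phi_C({\proj}^1)$ and the discriminant geometry of ${\proj}V_b$, which is in the spirit of how the space of rational curves $R_{a,b}$ is used later in \S\ref{sec: space curve}; but as a verification of the lemma itself, your version is complete and the genericity point you flag is exactly the right (and only) thing to check.
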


\begin{proof}
By the Riemann-Hurwitz formula the first projection $C\to{\proj}^1$ has 
$r=2g_C-2+2b$ branch points where $g_C$ is the genus of $C$. 
Substituting $g_C=(a-1)(b-1)$, we have $r=2a(b-1)$. 
These branch points on ${\proj}^1$ correspond to the intersection of $\phi_C({\proj}^1)$ with 
the discriminant hypersurface $D$ in ${\proj}V_b$. 
Since $D$ has degree $2(b-1)$, $\phi_C({\proj}^1)$ has degree $a$. 
\end{proof}

Conversely, given a general morphism $\phi\colon{\proj}^1\to{\proj}V_b$ of degree $a$, 
we obtain a curve on ${\proj}^1\times{\proj}^1$ by pulling back the universal divisor on ${\proj}V_b\times{\proj}^1$. 
Reversing the above calculation, we see that $C$ has bidegree $(a, b)$. 

Let $U_{a,b}$ be the space of morphisms ${\proj}^1\to{\proj}V_b$ of degree $a$, 
on which ${\PGLPGL}$ acts as follows: 
the first factor ${\PGL}_2$ acts on the source ${\proj}^1$ of the morphisms, 
and the second factor ${\PGL}_2$ acts on the target ${\proj}V_b$ in the natural way. 
Then the above construction gives a ${\PGLPGL}$-equivariant birational map 
\begin{equation}\label{eqn:space curve I}
{\proj}V_{a,b}=|{\OQ}(a, b)| \dashrightarrow U_{a,b}. 
\end{equation}
We obtain in particular that 
\begin{equation*}\label{eqn:space curve II}
{\proj}V_{a,b}/{\PGLPGL} \sim U_{a,b}/{\PGLPGL}. 
\end{equation*}
If we denote by $R_{a,b}$ the space of rational curves of degree $a$ in ${\proj}V_b$, 
this may also be written as 
\begin{equation}\label{eqn:space curve III}
{\proj}V_{a,b}/{\PGLPGL} \sim R_{a,b}/{\PGL}_2,  
\end{equation}
where ${\PGL}_2$ acts on $R_{a,b}$ by its action on ${\proj}V_b$. 
Since ${\PGL}_2$ as the subgroup of ${\aut}({\proj}V_b)\simeq{\PGL}_{b+1}$ is 
the stabilizer of a rational normal curve, we have 
\begin{equation*}\label{eqn:space curve IV}
{\proj}V_{a,b}/{\PGLPGL} \sim (R_{a,b}\times R_{b,b})/{\PGL}_{b+1}.  
\end{equation*}
Exchanging $a$ and $b$, we also obtain 
\begin{equation}\label{eqn:space curve V}
{\proj}V_{a,b}/{\PGLPGL} \sim R_{b,a}/{\PGL}_2 \sim (R_{b,a}\times R_{a,a})/{\PGL}_{a+1}. 
\end{equation}

\begin{remark}
The above \eqref{eqn:space curve I} and the description 
${\proj}V_{a,b}\sim{\proj}(V_a\otimes\mathcal{E})$ in \S \ref{sec: main} are connected 
by considering the linear span of $\phi_C({\proj}^1)$, 
which is generically $a$-dimensional and in which $\phi_C({\proj}^1)$ is a rational normal curve. 
\end{remark}

\subsection{The case $(a, b)=(2, 3)$}\label{ssec: (2,3)}

By \eqref{eqn:space curve V} it suffices to prove that $R_{3,2}/{\PGL}_2$ is rational, 
where $R_{3,2}\subset|{\Oplane}(3)|$ is the space of rational plane cubics 
and ${\PGL}_2\subset{\PGL}_3$ is the stabilizer of some reference smooth conic $\Gamma$. 
We may take the homogeneous coordinates $[X, Y, Z]$ of ${\proj}^2$ and 
normalize $\Gamma$ to be defined by $XZ=Y^2$. 

Every rational plane cubic has a unique singularity. 
We apply the slice method for the nodal map 
\begin{equation*}\label{eqn: nodal map cubic}
\kappa : R_{3,2} \to {\proj}^2, \qquad C\mapsto {\rm Sing}C,  
\end{equation*}
which is clearly ${\PGL}_2$-equivariant. 
The group ${\PGL}_2$ acts on ${\proj}^2-\Gamma$ transitively, 
and the stabilizer $G$ of the point $p=[0, 1, 0]$ is isomorphic to $({\Z}/2{\Z})\ltimes{\C}^{\times}$ 
where ${\Z}/2{\Z}$ acts by $[X, Y, Z]\mapsto[Z, Y, X]$ and 
$\alpha\in{\C}^{\times}$ acts by $[X, Y, Z]\mapsto[\alpha^{-1}X, Y, \alpha Z]$. 
The fiber $\kappa^{-1}(p)$ is an open set of the linear system ${\proj}V\subset|{\Oplane}(3)|$ of cubics singular at $p$. 
Hence we have 
\begin{equation*}
R_{3,2}/{\PGL}_2 \sim {\proj}V/G. 
\end{equation*}
 
The group $G$ acts linearly on $V$ and we have the following $G$-decomposition: 
\begin{equation*}
V = \langle XYZ \rangle \oplus \langle X^2Z, Z^2X\rangle \oplus \langle X^2Y, YZ^2\rangle \oplus 
               \langle X^3, Z^3\rangle. 
\end{equation*}
Let 
$W=\langle X^2Z, Z^2X, X^2Y, YZ^2\rangle$, 
$W^{\perp}=\langle XYZ, X^3, Z^3\rangle$, 
and consider the projection 
$\pi\colon{\proj}V\dashrightarrow{\proj}W$ from $W^{\perp}$. 
Then $\pi$ is a $G$-linearized vector bundle. 
Since $G$ acts on ${\proj}W$ almost freely, by the no-name method we have 
\begin{equation*}
{\proj}V/G \sim {\C}^3\times({\proj}W/G). 
\end{equation*}
The quotient ${\proj}W/G$ is rational because it is $2$-dimensional. 
This proves that ${\proj}V_{2,3}/{\PGLPGL}$ is rational.

\subsection{The case $(a, b)=(2, 4)$}\label{ssec: (2,4)}

By \eqref{eqn:space curve V} 
it is sufficient to show that $R_{4,2}/{\PGL}_2$ is rational, 
where ${\PGL}_2$ is the stabilizer in ${\PGL}_3$ of some smooth conic. 
General rational plane quartics have three nodes. 
Let $S^3{\proj}^2$ be the third symmetric product of ${\proj}^2$, and consider the nodal map 
\begin{equation}\label{eqn: nodal map quartic}
\kappa : R_{4,2} \dashrightarrow S^3{\proj}^2, \qquad C\mapsto {\rm Sing}C.  
\end{equation}
General $\kappa$-fibers are open sets of sub-linear systems of $|{\Oplane}(4)|$. 
Since ${\PGL}_2$ acts linearly on $H^0({\Oplane}(4))$, 
$\kappa$ is birationally the projectivization of a ${\PGL}_2$-linearized vector bundle. 
Since ${\PGL}_2$ acts on $S^3{\proj}^2$ almost freely, 
by the no-name lemma we have
\begin{equation*}
R_{4,2}/{\PGL}_2 \sim {\proj}^5\times(S^3{\proj}^2/{\PGL}_2). 
\end{equation*}

Using the slice method (in the converse direction), we see that 
\begin{equation*}
S^3{\proj}^2/{\PGL}_2 \sim (S^3{\proj}^2\times|{\Oplane}(2)|)/{\PGL}_3. 
\end{equation*}
We then apply the slice method to the projection $S^3{\proj}^2\times|{\Oplane}(2)|\to S^3{\proj}^2$. 
The group ${\GL}_3$ acts on $S^3{\proj}^2$ almost transitively, 
and the stabilizer $G$ of 
\begin{equation*}
p_1+p_2+p_3 = [1, 0, 0]+[0, 1, 0]+[0, 0, 1]
\end{equation*} 
is isomorphic to $\frak{S}_3\ltimes({\C}^{\times})^3$ 
where $\frak{S}_3$ acts by the permutations of $X, Y, Z$ and $({\C}^{\times})^3$ is the torus of diagonal matrices. 
Then we have 
\begin{equation*}
(S^3{\proj}^2\times|{\Oplane}(2)|)/{\PGL}_3 \sim |{\Oplane}(2)|/G \sim H^0({\Oplane}(2))/G. 
\end{equation*}

The $G$-representation $H^0({\Oplane}(2))$ is decomposed as 
\begin{equation*}
H^0({\Oplane}(2)) = \langle X^2, Y^2, Z^2\rangle \oplus \langle XY, YZ, ZX\rangle. 
\end{equation*}
We set $W=\langle X^2, Y^2, Z^2\rangle$ and $W^{\perp}=\langle XY, YZ, ZX\rangle$. 
The group $G$ acts on $W$ almost transitively, so that we may apply the slice method to 
the projection $H^0({\Oplane}(2))\to W$ from $W^{\perp}$. 
Hence for the stabilizer $H\subset G$ of a general point of $W$ we have 
\begin{equation*}
H^0({\Oplane}(2))/G \sim W^{\perp}/H. 
\end{equation*}
Then $W^{\perp}/H$ is birational to ${\C}^{\times}\times({\proj}W^{\perp}/H)$, 
and ${\proj}W^{\perp}/H$ is rational because it is $2$-dimensional. 
This completes the proof that ${\proj}V_{2,4}/{\PGLPGL}$ is rational.

\subsection{The case $(a, b)=(1, 4)$}\label{ssec: (1,4)}

The quotient ${\proj}V_{1,4}/{\PGLPGL}$ is birational to 
${\mathbb G}(1, {\proj}V_4)/{\PGL}_2$ by \eqref{eqn:space curve III}. 
Since $V_4\simeq V_4^{\vee}$ as ${\SL}_2$-representations, 
we have a ${\PGL}_2$-equivariant isomorphism 
${\mathbb G}(1, {\proj}V_4)\simeq{\mathbb G}(1, {\proj}V_4^{\vee})$. 
By projecting the standard rational normal curve in ${\proj}V_4^{\vee}$ from lines, 
we obtain a birational map 
\begin{equation*}
{\mathbb G}(1, {\proj}V_4^{\vee})/{\PGL}_2 \dashrightarrow R_{4,2}/{\PGL}_3. 
\end{equation*}
Thus the problem is reduced to the rationality of $R_{4,2}/{\PGL}_3$. 

We apply the slice method to the nodal map \eqref{eqn: nodal map quartic}, 
which we now regard as a ${\GL}_3$-equivariant map. 
We reuse the notations $p_1+p_2+p_3$, $G$ from \S \ref{ssec: (2,4)}. 
Then for the linear system ${\proj}V$ of quartics singular at $p_1+p_2+p_3$ we have  
\begin{equation*}
R_{4,2}/{\PGL}_3 \sim {\proj}V/G \sim V/G. 
\end{equation*}

In terms of the coordinate $[X, Y, Z]$ the $G$-representation $V$ is decomposed as 
\begin{equation*}
V = \langle X^2Y^2, Y^2Z^2, Z^2X^2\rangle \oplus \langle X^2YZ, Y^2ZX, Z^2XY\rangle. 
\end{equation*}
The rest of the proof is similar to the final step in \S \ref{ssec: (2,4)}: 
we may use the slice method for the projection of $V$ from either irreducible summand, 
and then resort to Castelnuovo's theorem to see that $V/G$ is rational. 
Thus ${\proj}V_{1,4}/{\PGLPGL}$ is rational.


\section{Transvectant}\label{sec: transvectant}

In this section we treat the cases $(a, b)=(1, 6), (1, 8)$. 
We first recall in \S \ref{ssec: transvectant} some basic facts about transvectants for biforms. 
In \S \ref{ssec: (1,6)} and \S \ref{ssec: (1,8)}
we study those cases by applying the method of double fibration (\cite{B-K}) to certain transvectants.

\subsection{Transvectants for biforms}\label{ssec: transvectant}

For two representations $V_{a,b}, V_{a',b'}$ of ${\SLSL}$, their tensor product is 
\begin{equation*}\label{eqn: tensor}
V_{a,b}\otimes V_{a',b'} 
= (V_a\boxtimes V_b)\otimes(V_{a'}\boxtimes V_{b'}) 
= (V_a\otimes V_{a'})\boxtimes(V_b\otimes V_{b'}). 
\end{equation*}
Applying the Clebsch-Gordan decomposition for ${\SL}_2$, 
\begin{equation}\label{eqn: CG SL2}
V_d\otimes V_{d'} = \bigoplus_{r=0}^{d''}V_{d+d'-2r}, \qquad d''={\min}\{d, d'\}, 
\end{equation}
we obtain the irreducible decomposition 
\begin{equation*}\label{eqn: CG}
V_{a,b}\otimes V_{a',b'} = \bigoplus_{r, s}V_{a+a'-2r, b+b'-2s},  
\end{equation*}
where $0\leq r\leq{\min}\{a, a'\}$ and $0\leq s\leq{\min}\{b, b'\}$. 
By this decomposition we have an ${\SLSL}$-equivariant bilinear map 
\begin{equation*}\label{eqn: transvectant}
T^{(r,s)} : V_{a,b}\times V_{a',b'} \to V_{a+a'-2r, b+b'-2s}, 
\end{equation*}
unique up to scalar multiplication. 
Let $T^{(r)}:V_d\times V_{d'}\to V_{d+d'-2r}$ be the $r$-th \textit{transvectant}, i.e., 
an ${\SL}_2$-bilinear map associated to \eqref{eqn: CG SL2}. 
Then a standard argument in linear algebra shows that $T^{(r,s)}$ is given (up to constant) by 
\begin{equation}\label{eqn: transvectant II}
T^{(r,s)}(P_1\otimes P_2, \; P_1'\otimes P_2') = T^{(r)}(P_1, P_1')\otimes T^{(s)}(P_2, P_2'), 
\end{equation}
where 
$P_1\in V_{a,0}=V_a$, $P_2\in V_{0,b}=V_b$, $P_1'\in V_{a',0}=V_{a'}$, and $P_2'\in V_{0,b'}=V_{b'}$.  

Let $[X, Y]$ be the homogeneous coordinate of ${\proj}^1$. 
The transvectant $T^{(r)}$ is given explicitly by the following (cf. \cite{Ol}):
\begin{equation}\label{eqn: calc transvectant SL2}
T^{(r)}(P, P') = \sum_{i=0}^{r} (-1)^{i} \binom{r}{i}
                         \frac{\partial^rP}{\partial X^{r-i}\partial Y^i}\frac{\partial^rP'}{\partial X^{i}\partial Y^{r-i}}. 
\end{equation}   
When $r=d'\leq d$ in particular, 
$T^{(d')}(P, P')$ is called the \textit{apolar covariant} and calculated by 
substituting $-\frac{\partial}{\partial Y}$, $\frac{\partial}{\partial X}$ respectively into $X, Y$ in $P'$, 
applying that differential polynomial to $P$, and then multiplying it by $d'!$. 

From \eqref{eqn: transvectant II} and \eqref{eqn: calc transvectant SL2} 
we may calculate the $(r, s)$-th transvectant $T^{(r,s)}$ explicitly 
in terms of the bi-homogeneous coordinate $([X_1, Y_1], [X_2, Y_2])$ of ${\proj}^1\times{\proj}^1$. 
For example, when $a=a'=1$ and $b\geq b'$, we have 
\begin{equation}\label{eqn: transvectant aa'1}
T^{(1,s)}(X_1\otimes P+Y_1\otimes Q, \; X_1\otimes P'+Y_1\otimes Q') = 
T^{(s)}(P, Q') - T^{(s)}(Q, P'), 
\end{equation}
where $s\leq b'$, $P, Q\in V_{0,b}=V_b$, and $P', Q'\in V_{0,b'}=V_{b'}$.

\subsection{The case $(a, b)=(1, 6)$}\label{ssec: (1,6)}

We shall apply the method of double fibration (\cite{B-K}) to the bi-apolar covariant  
\begin{equation*}\label{eqn: transvectant (1,6)}
T^{(1,2)} : V_{1,6}\times V_{1,2} \to V_{0,4}. 
\end{equation*}
Note that ${\dim}V_{1,2}={\dim}V_{0,4}+1$. 
The image of $V_{1,6}\to{\hom}(V_{1,2}, V_{0,4})$ 
given by $H\mapsto T^{(1,2)}(H, \bullet)$ is not contained in the degeneracy locus: 
for example, take $H$ to be $X_1X_2^3Y_2^3+Y_1(X_2^4Y_2^2+X_2^2Y_2^4)$. 
Thus the ${\PGLPGL}$-equivariant map 
\begin{equation*}\label{eqn: double bundle (1,6)}
\varphi : {\proj}V_{1,6} \dashrightarrow {\proj}V_{1,2}, \quad {\C}H\mapsto{\ker}(T^{(1,2)}(H, \bullet)), 
\end{equation*}
is well-defined. 
Note in passing that the $\varphi$-image of the above $X_1X_2^3Y_2^3+Y_1(X_2^4Y_2^2+X_2^2Y_2^4)$ 
defines a smooth curve on ${\proj}^1\times{\proj}^1$. 

\begin{lemma}\label{transitive (1,6)}
The group ${\PGLPGL}$ acts transitively on the open locus $U$ in ${\proj}V_{1,2}$ of smooth curves. 
If we take $C\in U$ to be $X_1Y_2^2+Y_1X_2^2=0$, 
its stabilizer $G$ is isomorphic to $({\Z}/2{\Z})\ltimes{\C}^{\times}$ where 
${\Z}/2{\Z}$ acts by $[X_i, Y_i]\mapsto[Y_i, X_i]$ and 
$\alpha\in{\C}^{\times}$ acts by $[X_1, Y_1]\mapsto[X_1, \alpha^2Y_1]$, $[X_2, Y_2]\mapsto[X_2, \alpha Y_2]$. 
\end{lemma}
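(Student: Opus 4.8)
The plan is to identify the smooth members of ${\proj}V_{1,2}$ with degree-two self-maps of ${\proj}^1$ and then read off both assertions from the elementary geometry of such maps. First I write a biform in $V_{1,2}$ as $X_1\otimes P+Y_1\otimes Q$ with $P,Q\in V_2$ binary quadratics in $(X_2,Y_2)$. Solving $X_1P+Y_1Q=0$ for $[X_1,Y_1]$ exhibits the curve $C$ as the graph of the rational map $g\colon{\proj}^1\to{\proj}^1$, $[X_2,Y_2]\mapsto[Q:-P]$, from the second factor to the first. The pencil $\langle P,Q\rangle$ is base-point-free exactly when $P$ and $Q$ have no common zero; in that case $g$ is a morphism of degree $2$ whose graph is smooth and isomorphic to ${\proj}^1$, whereas a common zero forces $C$ to contain an entire ruling $\{\ast\}\times{\proj}^1$ and hence to be singular. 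This identifies $U$, ${\PGLPGL}$-equivariantly, with the set of degree-$2$ morphisms ${\proj}^1\to{\proj}^1$, the second factor acting by precomposition on the source and the first by postcomposition on the target.

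For transitivity I use Riemann--Hurwitz: a degree-$2$ morphism has exactly two ramification points, with two distinct branch values. Applying the source ${\PGL}_2$ I move the ramification points to $0$ and $\infty$, and applying the target ${\PGL}_2$ I move the branch values to $0$ and $\infty$; the resulting map is then $z\mapsto cz^2$, and the residual torus in the source rescales $c$ to $1$. Hence every element of $U$ is carried to a single normal form, so the action is transitive.

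For the stabilizer I take $C$ as in the statement, which corresponds to $g(z)=-z^2$ in the affine coordinates $z=Y_2/X_2$ and $w=Y_1/X_1$, and solve the equivariance equation $A\circ g=g\circ B$ for $(A,B)\in{\PGLPGL}$. Matching ramification points and branch values forces both $A$ and $B$ to preserve $\{0,\infty\}$, so each is either a scaling or an inversion $z\mapsto\mu/z$. The scaling case produces the subgroup $z\mapsto\alpha z$, $w\mapsto\alpha^2 w$, which is the stated ${\C}^{\times}$; the inversion case produces the involution $[X_i,Y_i]\mapsto[Y_i,X_i]$, and conjugating the torus by it inverts $\alpha$, so the two pieces assemble into $G\simeq({\Z}/2{\Z})\ltimes{\C}^{\times}$.

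The computations are all elementary, so I do not expect a serious obstacle; the only points that need care are keeping straight which ${\PGL}_2$ acts on the source and which on the target of $g$, and verifying the relation $\sigma\tau_\alpha\sigma^{-1}=\tau_{\alpha^{-1}}$ that makes the product semidirect rather than direct.
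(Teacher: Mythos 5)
Your proof is correct, and it takes a genuinely different route from the paper's. The paper invokes the correspondence \eqref{eqn:space curve I} to identify $U$ with the set of linear embeddings $\phi\colon{\proj}^1\to{\proj}V_2$ whose image line is transverse to the discriminant conic $\Gamma\subset{\proj}V_2$; transitivity then comes from the classical fact that the stabilizer ${\PGL}_2\subset{\PGL}_3$ of $\Gamma$ acts transitively on secant lines, and the stabilizer of $C$ is found by noting that it injects into the second ${\PGL}_2$ with image the stabilizer of the pencil $\phi_C({\proj}^1)$, the rest being left as a ``little calculation.'' You instead read a smooth $(1,2)$-curve as the graph of a degree-$2$ morphism $g\colon{\proj}^1\to{\proj}^1$ and argue entirely with branched covers: Riemann--Hurwitz yields the normal form $z\mapsto z^2$ (hence transitivity), and solving the functional equation $A\circ g=g\circ B$ by matching ramification and branch loci yields the stabilizer. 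The two identifications are of course linked---the base-point-free pencil $\langle P,Q\rangle$ spanned by a smooth member is exactly the line $\phi_C({\proj}^1)$, and transversality to $\Gamma$ corresponds to $g$ having two simple branch points---but your version is self-contained and fully explicit where the paper quotes conic geometry and omits the stabilizer computation, at the cost of being longer. Your case analysis is complete: if $A\circ g=g\circ B$ then $B$ preserves the ramification set and $A$ the branch set, scalings and inversions exhaust the elements of ${\PGL}_2$ preserving $\{0,\infty\}$, the two cases pair up as you describe, and the relation $\sigma\tau_\alpha\sigma^{-1}=\tau_{\alpha^{-1}}$ checks out, giving the stated semidirect product. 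Two microscopic corrections: with the coordinates $([X_1,Y_1],[X_2,Y_2])$, a common zero of $P,Q$ forces $C$ to contain the ruling ${\proj}^1\times\{\ast\}$ (first factor free), not $\{\ast\}\times{\proj}^1$ as you wrote; and in the normalization step it is worth one explicit word that, having fixed the ramification points at $0,\infty$, you still choose which branch value the target ${\PGL}_2$ sends to $0$, which rules out the alternative normal form $z\mapsto c/z^2$---your phrasing does this implicitly but does not say so.
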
 

\begin{proof}
By the birational map \eqref{eqn:space curve I} 
$U$ is mapped isomorphically to the space of linear embeddings $\phi\colon{\proj}^1\to{\proj}V_2$ such that 
$\phi({\proj}^1)$ is transverse to the diagonal conic $\Gamma\subset{\proj}V_2$. 
The first assertion holds because the lines in ${\proj}V_2$ transverse to $\Gamma$ are all ${\PGL}_2$-equivalent. 
The stabilizer in ${\PGLPGL}$ of any $C\in U$ is mapped injectively by the projection to the second ${\PGL}_2$, 
and its image is the stabilizer of the pencil $\phi_C({\proj}^1)$. 
Our second assertion follows from this observation and little calculation. 
\end{proof}

By this lemma we may apply the slice method to $\varphi$. 
The $\varphi$-fiber over ${\C}(X_1Y_2^2+Y_1X_2^2)$ is an open set of the projectivization of the linear space 
\begin{equation*}
V = \{ H\in V_{1,6}, \: \, T^{(1,2)}(H, \: X_1Y_2^2+Y_1X_2^2)=0\}. 
\end{equation*}
Then we have 
\begin{equation*}
{\proj}V_{1,6}/{\PGLPGL} \sim {\proj}V/G, 
\end{equation*}
where $G$ is as described in the above lemma. 
The $G$-action on ${\proj}V$ is induced from the linear $G$-action on $V$ given by 
\begin{equation*}
\alpha\in{\C}^{\times} : P_1(X_1, Y_1)P_2(X_2, Y_2) \mapsto 
                                    \alpha^{-4}P_1(X_1, \alpha^2Y_1)P_2(X_2, \alpha Y_2), 
\end{equation*}
where $P_1\in V_{1,0}$ and $P_2\in V_{0,6}$. 

We express elements of $V_{1,6}$ as 
$X_1P+Y_1Q$, 
$P=\sum_{i=0}^6\binom{6}{i}\alpha_iX_2^iY_2^{6-i}$, and 
$Q=\sum_{i=0}^6\binom{6}{i}\beta_iX_2^iY_2^{6-i}$. 
By direct calculation using \eqref{eqn: transvectant aa'1} and \eqref{eqn: calc transvectant SL2}, 
we see that $V$ is defined by 
\begin{equation*}
\alpha_i=\beta_{i+2}, \quad 0\leq i\leq4. 
\end{equation*}
Then we have the $G$-decomposition $V=\oplus_{i=0}^{4}W_i$, where  
\begin{equation*}
W_0 = \langle X_1X_2^2Y_2^4+Y_1X_2^4Y_2^2 \rangle, 
\end{equation*} 
\begin{equation*}    
W_1 = \langle 10X_1X_2^3Y_2^3+3Y_1X_2^5Y_2, \: 3X_1X_2Y_2^5+10Y_1X_2^3Y_2^3\rangle, 
\end{equation*} 
\begin{equation*}    
W_2 = \langle 15X_1X_2^4Y_2^2+Y_1X_2^6, \: X_1Y_2^6+15Y_1X_2^2Y_2^4 \rangle, 
\end{equation*} 
\begin{equation*}    
W_3 = \langle X_1X_2^5Y_2, \: Y_1X_2Y_2^5 \rangle, 
\end{equation*} 
\begin{equation*}    
W_4 = \langle X_1X_2^6, \: Y_1Y_2^6 \rangle. 
\end{equation*}
For $i\geq1$ the $i$-th summand $W_i$ is the induced representation of 
the weight $i$ scalar representation of ${\C}^{\times}$. 
The group $G$ acts almost freely on ${\proj}(W_1\oplus W_2)$. 
Therefore we may apply the no-name method to 
the projection ${\proj}V\dashrightarrow{\proj}(W_1\oplus W_2)$ from $W_0\oplus W_3\oplus W_4$ 
to see that 
\begin{equation*}
{\proj}V/G \sim {\C}^5\times({\proj}(W_1\oplus W_2)/G). 
\end{equation*}
Then ${\proj}(W_1\oplus W_2)/G$ is $2$-dimensional and hence is rational. 
This finishes the proof that ${\proj}V_{1,6}/{\PGLPGL}$ is rational.

\subsection{The case $(a, b)=(1, 8)$}\label{ssec: (1,8)}

We want to show that the $(1, 2)$-th transvectant 
\begin{equation*}\label{eqn: transvectant (1,8)}
T^{(1,2)} : V_{1,8}\times V_{1,4} \to V_{0,8} 
\end{equation*}
determines a double fibration (\cite{B-K}). 
Note that ${\dim}V_{1,4}={\dim}V_{0,8}+1$. 
The non-degeneracy condition is checked, e.g., by the following. 

\begin{lemma}\label{non-degenerate (1,8)}
Take $H=X_1X_2^2Y_2^6+Y_1X_2^6Y_2^2\in V_{1,8}$ and $H'=X_1Y_2^4+Y_1X_2^4\in V_{1,4}$. 
Then we have $T^{(1,2)}(H, H')=0$, and the linear maps 
$T^{(1,2)}(H, \bullet)\colon V_{1,4}\to V_{0,8}$ and $T^{(1,2)}(\bullet, H')\colon V_{1,8}\to V_{0,8}$ 
are both surjective. 
\end{lemma}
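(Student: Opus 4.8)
The plan is to verify Lemma~\ref{non-degenerate (1,8)} by a direct but carefully organized computation using the explicit transvectant formula \eqref{eqn: transvectant aa'1}. First I would record, via \eqref{eqn: transvectant aa'1} specialized to $a=a'=1$, $b=8$, $b'=4$, $s=2$, that for $H=X_1\otimes P+Y_1\otimes Q$ with $P,Q\in V_{0,8}$ and $H'=X_1\otimes P'+Y_1\otimes Q'$ with $P',Q'\in V_{0,4}$ we have
\begin{equation*}
T^{(1,2)}(H,H')=T^{(2)}(P,Q')-T^{(2)}(Q,P').
\end{equation*}
For the chosen $H,H'$ this reads $T^{(2)}(X_2^2Y_2^6,X_2^4)-T^{(2)}(X_2^6Y_2^2,Y_2^4)$, and I would compute each second transvectant of a monomial in $V_{0,8}$ against a monomial in $V_{0,4}$ using \eqref{eqn: calc transvectant SL2} with $r=2$. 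By the symmetry $[X_2,Y_2]\leftrightarrow[Y_2,X_2]$ that interchanges the two terms (it swaps $X_2^2Y_2^6\leftrightarrow X_2^6Y_2^2$ and $X_2^4\leftrightarrow Y_2^4$, while the transvectant itself is skew under this exchange of arguments in the appropriate sense), the two monomial transvectants coincide and cancel, giving $T^{(1,2)}(H,H')=0$. I would still carry out the explicit three-term sum for $r=2$ to confirm the numerical coefficients match.

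Next I would establish the two surjectivity claims. For $T^{(1,2)}(H,\bullet)\colon V_{1,4}\to V_{0,8}$, note that both spaces have the expected dimensions with $\dim V_{1,4}=\dim V_{0,8}+1$, so surjectivity is equivalent to the map having exactly one-dimensional kernel, i.e.\ to its having rank $9$. Writing a general $H'=X_1\otimes P'+Y_1\otimes Q'$ with $P',Q'\in V_{0,4}$ and using the formula above, the image is spanned by the forms $T^{(2)}(P,Q')-T^{(2)}(Q,P')$ as $P',Q'$ range over a basis of $V_{0,4}$. With $P=X_2^2Y_2^6$ and $Q=X_2^6Y_2^2$ fixed, I would expand $T^{(2)}(P,\bullet)$ and $T^{(2)}(Q,\bullet)$ on the monomial basis $X_2^iY_2^{4-i}$ of $V_{0,4}$ and read off the resulting $9\times10$ coefficient matrix on the target monomial basis $X_2^jY_2^{8-j}$. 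Surjectivity then amounts to this matrix having full row rank $9$, which I would check by exhibiting a nonvanishing $9\times9$ minor. The argument for $T^{(1,2)}(\bullet,H')\colon V_{1,8}\to V_{0,8}$ is entirely parallel: here the source has dimension $18$ and the target dimension $9$, and with $P'=Y_2^4$, $Q'=X_2^4$ fixed I would assemble the corresponding $9\times18$ matrix and verify full row rank.

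The main obstacle is simply the bookkeeping in the second transvectant: the operator $T^{(2)}$ on a degree-$8$ form paired with a degree-$4$ form produces, for each input monomial, a three-term combination of second mixed partials with binomial coefficients $\binom{2}{i}$, and I must track these against both source and target monomial bases without arithmetic slips. The cleanest way to control this is to use the apolar description recalled after \eqref{eqn: calc transvectant SL2}: since $s=2\le b'=4$ is not maximal this is not literally the apolar covariant, but I can still exploit that $T^{(2)}(\bullet,R')$ for a fixed $R'\in V_{0,4}$ acts as a second-order constant-coefficient differential operator (up to the degree-lowering normalization), which makes the monomial-by-monomial computation mechanical and the full-rank minor transparent. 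Once the single identity and the two full-rank minors are in hand, the lemma follows immediately, and the chosen $H,H'$ then serve as the explicit basepoint for the double fibration of \cite{B-K} applied to $T^{(1,2)}$.
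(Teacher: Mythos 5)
Your proposal is correct and takes essentially the same approach as the paper, whose proof of this lemma is precisely the direct (``straightforward but lengthy'') computation via \eqref{eqn: transvectant aa'1} and \eqref{eqn: calc transvectant SL2} that you outline, with the details left to the reader. In fact the bookkeeping is lighter than you anticipate: on the monomial bases each of $T^{(2)}(X_2^2Y_2^6,\bullet)$, $T^{(2)}(X_2^6Y_2^2,\bullet)$, $T^{(2)}(\bullet,X_2^4)$, $T^{(2)}(\bullet,Y_2^4)$ carries each monomial to a scalar multiple of a single monomial, so the coefficient matrices are monomial up to sign and both rank-$9$ claims (and the cancellation giving $T^{(1,2)}(H,H')=0$, where your ``skew'' symmetry is really equivariance under $X_2\leftrightarrow Y_2$, which for even $s=2$ makes the two terms equal) can be read off directly.
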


\begin{proof}
This is verified by a straightforward (but lengthy) calculation using 
\eqref{eqn: transvectant aa'1} and \eqref{eqn: calc transvectant SL2}. 
We leave it to the reader. 
\end{proof}

Therefore by \cite{B-K} the ${\PGLPGL}$-equivariant map 
\begin{equation*}
{\proj}V_{1,8}\dashrightarrow{\proj}V_{1,4}, \quad {\C}H\mapsto{\ker}(T^{(1,2)}(H, \bullet)), 
\end{equation*}
is well-defined, dominant, and birationally a projective space bundle. 
Explicitly, let 
\begin{equation*}
\mathcal{H} = \{ (H, {\C}H')\in V_{1,8}\times{\proj}V_{1,4}, \; T^{(1,2)}(H, H')=0 \}. 
\end{equation*}
Then $\mathcal{H}$ is generically a sub-vector bundle of $V_{1,8}\times{\proj}V_{1,4}$ 
invariant under the ${\SLSL}$-linearization. 
By the lemma $\mathcal{H}$ has generically the expected rank $9$, 
and the restriction of the natural projection ${\proj}\mathcal{H}\to{\proj}V_{1,8}$ to the main component of ${\proj}\mathcal{H}$ is birational. 
Since ${\SL}_2\times{\PGL}_2$ acts linearly on $V_{1,8}$, $\mathcal{H}$ is in fact ${\SL}_2\times{\PGL}_2$-linearized. 
On the other hand, consider the natural hyperplane bundle ${\sheaf}_{{\proj}V_{1,4}}(1)$ on ${\proj}V_{1,4}$. 
The element $(-1, 1)\in{\SL}_2\times{\PGL}_2$ acts on ${\sheaf}_{{\proj}V_{1,4}}(1)$ by $-1$, 
so that the bundle $\mathcal{H}'=\mathcal{H}\otimes{\sheaf}_{{\proj}V_{1,4}}(1)$ is ${\PGLPGL}$-linearized. 
Then ${\proj}\mathcal{H}'$ is canonically isomorphic to ${\proj}\mathcal{H}$. 
The group ${\PGLPGL}$ acts almost freely on ${\proj}V_{1,4}$, 
for a general rational plane quartic has no nontrivial stabilizer in ${\PGL}_3$ (cf. \S \ref{ssec: (1,4)}). 
Hence we may apply the no-name lemma to $\mathcal{H}'$ to see that 
\begin{equation*}
{\proj}\mathcal{H}/{\PGLPGL} \sim {\proj}\mathcal{H}'/{\PGLPGL} 
\sim {\proj}^8\times({\proj}V_{1,4}/{\PGLPGL}). 
\end{equation*}
In \S \ref{ssec: (1,4)} we proved that ${\proj}V_{1,4}/{\PGLPGL}$ is rational. 
Therefore ${\proj}V_{1,8}/{\PGLPGL}$ is rational.



\begin{thebibliography}{99}


\bibitem{B-K}Bogomolov, F. A.; Katsylo, P. I.
\textit{Rationality of some quotient varieties.} 
Mat. Sb. (N.S.) \textbf{126}(\textbf{168}) (1985), 584--589.

\bibitem{Ka1}Katsylo, P. I. 
\textit{Rationality of the orbit spaces of irreducible representations of the group ${\SL}_2$.} 
Izv. Akad. Nauk SSSR. \textbf{47} (1984), 26--36. 

\bibitem{Ka2}Katsylo, P. I.
\textit{Rationality of the moduli spaces of hyperelliptic curves.} 
Izv. Akad. Nauk SSSR. \textbf{48} (1984), 705--710. 

\bibitem{Ka3}Katsylo, P. I. 
\textit{Rationality of fields of invariants of reducible representations of ${\SL}_2$.} 
Mosc. Univ. Math. Bull. \textbf{39} (1984) 80--83. 

\bibitem{Ol}Olver, P. J. 
\textit{Classical Invariant Theory.} 
London Math. Soc. Stud. Texts, \textbf{44}, Cambridge Univ. Press, 1999. 

\bibitem{SB1}Shepherd-Barron, N. I.
\textit{The rationality of certain spaces associated to trigonal curves.} 
Algebraic geometry, Bowdoin, 1985, 165--171, 
Proc. Symp. Pure Math., \textbf{46}, Part 1, Amer. Math. Soc., Providence, 1987.

\bibitem{SB2}Shepherd-Barron, N. I.
\textit{The rationality of some moduli spaces of plane curves.} 
Compositio Math. \textbf{67} (1988) 51--88

\end{thebibliography}
\end{document}